\newtheorem{theorem}{Theorem}[section]
\newtheorem{lemma}[theorem]{Lemma}
\newtheorem{proposition}[theorem]{Proposition}
\newtheorem{corollary}[theorem]{Corollary}
\theoremstyle{definition}
\newtheorem{definition}[theorem]{Definition}
\numberwithin{equation}{section}
\newcommand{\N}{\mathbb{N}}
\newcommand{\R}{\mathbb{R}}
\subjclass[2010]{47L10,
%Algebras of operators on Banach spaces and other topological linear spaces
46B03 
%Isomorphic theory (including renorming) of Banach spaces
(primary); 
46B10,  
%Duality and reflexivity
46B45   	
%Banach sequence spaces 
(secondary)} 
\keywords{Banach space, Grothendieck space, Tsirelson space, Baernstein space, bounded operator, diagonal operator}
\begin{document}
\title[The algebras $\mathscr{B}(T)$ and $\mathscr{B}(B_p)$ are not Grothendieck spaces]{The algebras of bounded operators on the Tsirelson and Baernstein spaces are not Grothendieck spaces}

\author[K.~Beanland]{Kevin Beanland}
\address{Department of Mathematics, Washington and Lee University, Lexington, VA 24450, USA}
\email{beanlandk@wlu.edu}

\author[T.~Kania]{Tomasz Kania}
\address{Mathematics Institute,
University of Warwick,
Gibbet Hill Rd, 
Coventry, CV4 7AL, 
United Kingdom}
\email{tomasz.marcin.kania@gmail.com}

\author[N.~J.~Laustsen]{Niels Jakob Laustsen}
\address{Department of Mathematics and Statistics, Fylde College, Lancaster University, Lancaster,
LA1 4YF, United Kingdom}
\email{n.laustsen@lancaster.ac.uk}
\begin{abstract}
We present two new examples of reflexive Banach spaces~$X$ for which the associated Banach algebra~$\mathscr{B}(X)$ of bounded operators on~$X$ is not a Grothen\-dieck space, namely $X = T$ (the Tsirelson space) and $X = B_p$ (the $p^{\text{th}}$ Baern\-stein space) for $1<p<\infty$.
\end{abstract}
\maketitle
\section{Introduction and statement of the main result}
\noindent A \emph{Grothen\-dieck space} is a Banach space $X$ for which every weak*-convergent sequence in the dual space $X^*$ converges weakly. The name originates from a result of Grothen\-dieck, who showed that $\ell_\infty$, and more generally every injective Banach space, has this property. Plainly, every reflexive Banach space~$X$ is a Grothen\-dieck space because the weak and weak* topologies on $X^*$ coincide. By the Hahn--Banach theorem, the class of Grothen\-dieck spaces is closed under quotients, and hence in particular under passing to complemented sub\-spaces.\smallskip

Substantial efforts have been devoted to the study of Grothen\-dieck spaces over the years, especially in the case of $C(K)$-spaces. Notable achievements include the constructions by Ta\-la\-grand~\cite{tala} (assuming the Continuum Hypo\-thesis) and Haydon~\cite{haydon} of compact Haus\-dorff spaces~$K_T$ and~$K_H$, respectively, such that the Banach spaces~$C(K_T)$ and~$C(K_H)$ are Grothen\-dieck, but~$C(K_T)$  has no quotient isomorphic to~$\ell_\infty$, while~$C(K_H)$ has the weaker property that it contains no subspace isomorphic to~$\ell_\infty$. However, $K_H$ has the significant advantage over~$K_T$ that it exists within ZFC. Haydon, Levy and Odell~\cite[Corollary~3F]{hlo} have subsequently shown that Ta\-la\-grand's result cannot be obtained within ZFC itself because under the assumption of Martin's Axiom and the negation of the Continuum Hypo\-thesis, every non-reflexive Grothen\-dieck space has a quotient isomorphic to~$\ell_\infty$. Brech~\cite{brech} has pursued these ideas even further using forcing to construct a model of ZFC in which there is a compact Haus\-dorff space~$K_B$ such that~$C(K_B)$ is a Grothen\-dieck space and has density strictly smaller than the continuum, so in particular no quotient of~$C(K_B)$ is isomorphic to~$\ell_\infty$. In another direction, Bour\-gain~\cite{bourgain} has shown that~$H^\infty$ is a Grothen\-dieck space.\smallskip 

Nevertheless, a general structure theory of Grothen\-dieck spaces is yet to materialize, and many fundamental questions about the nature of this class remain open. Diestel \cite[\S{}3]{diestel1973} produced an expository list of such questions in 1973. It is remarkable how few of these questions that have been resolved in the meantime.\smallskip

We shall make a small contribution towards the resolution of the seemingly very difficult problem of describing the Banach spaces~$X$ for which the associated Banach algebra~$\mathscr{B}(X)$ of bounded operators on~$X$ is a Grothen\-dieck space by proving the following result. 
\begin{theorem}\label{mainthm}
Let~$X$ be either the Tsirelson space~$T$ or the $p^{\text{th}}$ Baern\-stein space~$B_p$, where $1<p<\infty$. Then the Banach algebra~$\mathscr{B}(X)$ is not a Grothen\-dieck space. 
\end{theorem}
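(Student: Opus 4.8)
The plan is to exhibit a bounded sequence $(\varphi_n)$ in the dual $\mathscr{B}(X)^*$ that converges to $0$ in the weak* topology but not weakly. This suffices: setting $\Theta(S)=(\varphi_n(S))_n$ defines a bounded operator $\mathscr{B}(X)\to c_0$ precisely because $(\varphi_n)$ is weak*-null, and its bi-adjoint $\Theta^{**}$ maps back into $c_0$---that is, $\Theta$ is weakly compact---precisely when $(\varphi_n)$ is weakly null; so a weak*-null, non-weakly-null sequence produces a non-weakly-compact operator into $c_0$ and thereby witnesses the failure of the Grothendieck property.

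Since $X$ is reflexive and carries a normalized, $1$-unconditional basis $(e_i)$, I would build the $\varphi_n$ from far-out, Schreier-admissible block families. Fix integers $k_n\to\infty$ and choose, for each $n$, normalized blocks $y_{n,1}<\dots<y_{n,k_n}<z_{n,1}<\dots<z_{n,k_n}$ whose joint support marches to infinity with $n$, together with biorthogonal norming functionals $y_{n,j}^*,z_{n,j}^*$ supported on the respective blocks. Put $\bar y_n=\frac1{k_n}\sum_j y_{n,j}$ and $\bar z_n^*=\sum_j z_{n,j}^*$, and let $\varphi_n=\bar y_n\otimes\bar z_n^*$ be the associated rank-one nuclear functional, so that $\varphi_n(S)=\langle S\bar y_n,\bar z_n^*\rangle$. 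Because the supports of the $y$'s and $z$'s are disjoint we have $\varphi_n(I)=0$, and the admissibility estimates in $X$ and the dual $c_0$- (resp.\ $\ell_{p'}$-) upper estimates for $\bar z_n^*$ in $X^*$ give $\sup_n\|\varphi_n\|<\infty$.

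Non-weak-nullity I would obtain from the ``connecting'' operators $C_n=\sum_j z_{n,j}\otimes y_{n,j}^*$. A direct computation using the biorthogonality within and across the families yields $\varphi_n(C_m)=\delta_{nm}$. Hence, provided the finite sums $\sum_{m\le N}C_m$ are uniformly bounded in norm---which should follow from $1$-unconditionality together with the fact that the block families form a Schauder decomposition, so that block-diagonal operators are controlled by the supremum of their blocks---every convex combination satisfies $\bigl\|\sum_{n\le N}\lambda_n\varphi_n\bigr\|\ge 1/\sup_N\|\sum_{m\le N}C_m\|>0$. By Mazur's theorem $(\varphi_n)$ is then not weakly null; in fact it dominates the unit vector basis of $\ell_1$.

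The main obstacle is weak*-nullity: one must show $\varphi_n(S)=\langle S\bar y_n,\bar z_n^*\rangle\to0$ for \emph{every} $S\in\mathscr{B}(X)$, not merely for the $C_m$. This is the step that uses the geometry of $T$ and $B_p$ in an essential way, and it fails for a Hilbert space---consistently with $\mathscr{B}(\ell_2)$ being Grothendieck, as it is a von Neumann algebra---since in $\ell_2$ the full connector $\sum_n C_n$ is bounded and destroys weak*-nullity. In $T$ and $B_p$ the point is that Schreier-admissibility forces the operator norm needed to reproduce the pairing $\langle S\bar y_n,\bar z_n^*\rangle$ on infinitely many families at once to blow up; quantitatively this rests on the $\ell_1$- (resp.\ $\ell_p$-) lower estimates for admissible families, the dual upper estimates for the functionals, and reflexivity (so that each $S$ is weakly compact and $S\bar y_n\to0$ weakly). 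Making this estimate uniform over all $S$, and tuning the growth of $k_n$ and the rate at which the families escape to infinity so that weak*-nullity and the uniform boundedness of $\sum_{m\le N}C_m$ hold simultaneously, is where the real work lies.
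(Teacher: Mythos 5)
Your overall framework is sound---$\mathscr{B}(X)$ fails to be a Grothendieck space if and only if some weak*-null sequence in $\mathscr{B}(X)^*$ is not weakly null---but the construction you propose cannot be completed, because your two requirements are mutually exclusive in \emph{every} Banach space with a basis, not just in $\ell_2$. Suppose, as your Mazur argument requires, that the partial sums $T_N=\sum_{m\leqslant N}C_m$ satisfy $\sup_N\|T_N\|=K<\infty$. Each functional $y_{m,j}^*$ is supported on the block $y_{m,j}$, so for every finitely supported $x$ we have $C_mx=0$ once $m$ is large; hence $(T_Nx)_N$ is eventually constant on the dense subspace $c_{00}$, and by uniform boundedness the operators $T_N$ converge strongly to a bounded operator $C=\sum_{m=1}^\infty C_m$ with $\|C\|\leqslant K$. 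Biorthogonality then gives $C\bar y_n=C_n\bar y_n=\frac{1}{k_n}\sum_j z_{n,j}$, whence $\varphi_n(C)=1$ for every $n$. Thus the single bounded operator $C$ witnesses that $(\varphi_n)$ is \emph{not} weak*-null: the very hypothesis you need for non-weak-nullity destroys the weak*-nullity you also need, and no tuning of $k_n$ or of the supports can reconcile them. The obstruction you correctly observed in $\ell_2$ is universal, not a feature of Hilbert-space geometry. Separately, your justification of the uniform bound---that $1$-unconditionality plus the FDD property makes block-diagonal operators controlled by the supremum of their blocks---is false: Proposition~\ref{unbdddiagop} of the paper exhibits norm-one rank-one operators $U_n$ on the blocks $F_n$ of $B_p$ whose diagonal is unbounded. (For $T$ the principle \emph{is} true, by \cite[Corollary~7(i)]{CJT}, but then $C$ is bounded and the contradiction above strikes immediately.) Finally, the weak*-nullity step is in any case left unproven, so even setting the contradiction aside the argument is incomplete at its central point.

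This collapse is precisely why the paper takes a different route: it never writes down the witnessing sequence as rank-one tensors. Instead it embeds $\bigl(\bigoplus_{n\in\N}\ell_1^{m_n}\bigr)_{\ell_\infty}$ complementably into $\mathscr{B}(X)$ and invokes Johnson's theorem (Theorem~\ref{johnsonell1thm}) that this space contains a complemented copy of $\ell_1$, hence is not Grothendieck, the Grothendieck property being inherited by complemented subspaces. For $X=T$ the boundedness of full diagonal operators---the very fact that kills your approach---is used \emph{positively}: $T$ is isomorphic to $\bigl(\bigoplus_{n\in\N}\ell_1^{m_n}\bigr)_E$, so Lemma~\ref{lemmaKaniaAbstractMethod} and Corollary~\ref{corKaniaAbstractMethod} apply. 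For $X=B_p$, where diagonals may be unbounded, the paper instead proves a hands-on norm estimate (Lemma~\ref{normestimateofDelta}) for one carefully chosen column operator, obtains a complemented copy of $\bigl(\bigoplus_{n\in\N}\ell_1^n\bigr)_{c_0}$ inside $\mathscr{K}(B_p)$ (Corollary~\ref{c0ofl1nsinKB}), and passes to biduals using $\mathscr{K}(B_p)^{**}=\mathscr{B}(B_p)$. The weak*-null, non-weakly-null sequence ultimately lives on the complemented copy of $\ell_1$ furnished by Johnson's theorem, pulled back by the projection---and, as your own computation shows once pushed through, it could never have been of the rank-one form $\bar y_n\otimes\bar z_n^*$ paired biorthogonally against connectors with uniformly bounded partial sums.
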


For details of the spaces~$T$ and~$B_p$, we refer to Section~\ref{section3}. We remark that  these spaces are not the first examples of reflexive Banach spaces~$X$ for which~$\mathscr{B}(X)$ is known not to be a Grothen\-dieck space due to the following result of the second-named author~\cite{kania}.
\begin{theorem}[Kania]\label{kania}
Let $X = \bigl(\bigoplus_{n\in\N} \ell_q^n\bigr)_{\ell_p}$, where $1<p<\infty$ and either $q = 1$ or $q=\infty$. Then the Banach algebra~$\mathscr{B}(X)$ is not a Grothen\-dieck space. 
\end{theorem}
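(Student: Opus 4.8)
The plan is to exploit two standard reductions and then produce a single witnessing operator into $c_0$. Recall the standard fact that a Banach space $Y$ is a Grothendieck space if and only if every bounded operator from $Y$ into $c_0$ is weakly compact; equivalently, $Y$ fails to be Grothendieck precisely when $Y^*$ carries a weak*-null sequence that is not weakly null. Since the Grothendieck property passes to complemented subspaces, I would first locate a convenient complemented subspace of $\mathscr{B}(X)$. Writing $J_n$ and $Q_n$ for the canonical inclusion of, and projection onto, the $n^{\text{th}}$ summand $\ell_q^n$ of $X = (\bigoplus_n \ell_q^n)_{\ell_p}$, the block-diagonal compression $\Delta\colon S\mapsto \bigoplus_n Q_n S J_n$ is a norm-one projection of $\mathscr{B}(X)$ onto the subalgebra of block-diagonal operators, which is isometric to $(\bigoplus_n \mathscr{B}(\ell_q^n))_{\ell_\infty}$ because the operator norm of a block-diagonal operator on an $\ell_p$-sum is the supremum of the norms of its blocks.

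Next I would compute these blocks. For $q=\infty$ the operator norm on $\mathscr{B}(\ell_\infty^n)$ is the maximal row sum, so $\mathscr{B}(\ell_\infty^n)$ is isometric to $(\bigoplus_{i=1}^n \ell_1^n)_{\ell_\infty}$ (the rows carry the $\ell_1^n$-norm, assembled in the $\ell_\infty$-direction); for $q=1$ the norm is the maximal column sum and the same identification holds with columns in place of rows. Flattening the nested $\ell_\infty$-sums, the block-diagonal subalgebra is in both cases isometric to $(\bigoplus_k \ell_1^{m_k})_{\ell_\infty}$ for a sequence of dimensions with $\sup_k m_k = \infty$; retaining one block of each dimension (another norm-one projection) reduces matters to showing that $V := (\bigoplus_k \ell_1^{k})_{\ell_\infty}$ is not a Grothendieck space. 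This is exactly the point at which the hypothesis $q\in\{1,\infty\}$ enters: for $1<q<\infty$ the blocks $\mathscr{B}(\ell_q^n)$ are not of this form, and the corresponding $\ell_\infty$-sum is far better behaved.

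It remains to manufacture a bounded, non-weakly-compact operator $\Phi\colon V\to c_0$, equivalently a weak*-null, non-weakly-null sequence $(\varphi_m)$ in $V^*$. Here I would use the growing dimension of the blocks in an essential way: the $\ell_1$-budget $\|x^{(k)}\|_1\le\|z\|$ available in block $k$ must be shared among its $k$ coordinates, so a functional that, for each fixed $z\in V$, spreads its mass ever more thinly across the coordinates of the high-dimensional blocks can be forced to annihilate $z$ in the limit while still having norm bounded below. The non-weak-compactness of $\Phi$ would then be witnessed not by a sequence in $V$ but by an element $\Lambda\in V^{**}$, obtained as a weak*-cluster point of a bounded family of normalized averaging vectors $u_k = k^{-1}\sum_{i=1}^k e_i^{(k)}$, for which $\Lambda(\varphi_m)\not\to 0$. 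I expect the construction and, above all, the verification of these two competing demands to be the main obstacle: naive candidates fail, since block-summing functionals assembled in the $\ell_1$-part of $V^*$ are weak*-null only when they are norm-null, while rank-one functionals $\xi_m\otimes\eta$ with $\eta$ fixed are automatically weakly null. The genuinely delicate step is thus a gliding-hump argument that balances a uniform estimate over the whole unit ball of $V$ (for weak*-nullity) against a matching lower bound supplied by the bidual witness (for the failure of weak nullity), and it is precisely the unboundedness of $\sup_k m_k$ that makes such a balance attainable.
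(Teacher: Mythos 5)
Your chain of reductions is correct, and in substance it retraces the strategy that the paper abstracts in Lemma~\ref{lemmaKaniaAbstractMethod} and Corollary~\ref{corKaniaAbstractMethod}: the compression $S\mapsto\operatorname{diag}(Q_nSJ_n)$ is indeed a norm-one projection of $\mathscr{B}(X)$ onto the block-diagonal subalgebra, which is isometric to $\bigl(\bigoplus_{n\in\N}\mathscr{B}(\ell_q^n)\bigr)_{\ell_\infty}$; the identifications of $\mathscr{B}(\ell_\infty^n)$ (maximal row sum) and $\mathscr{B}(\ell_1^n)$ (maximal column sum) as $\ell_\infty$-sums of copies of $\ell_1^n$ are correct, and they have the pleasant feature of treating $q=1$ and $q=\infty$ symmetrically; flattening and one further coordinate projection then legitimately reduce everything to showing that $V=\bigl(\bigoplus_{n\in\N}\ell_1^n\bigr)_{\ell_\infty}$ is not a Grothendieck space. (The paper reaches the same point more economically, using only one rank-one operator per block rather than the whole matrix algebra.)

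The genuine gap is that this last statement is never proved, and it is the crux of the theorem rather than a verification one may defer: it is precisely Johnson's theorem (Theorem~\ref{johnsonell1thm}, from~\cite{johnson}), which asserts that $\bigl(\bigoplus_{n\in\N}\ell_1^n\bigr)_{\ell_\infty}$ contains a complemented copy of $\ell_1$ and hence fails the Grothendieck property. Your proposal replaces this input by the announcement of a ``gliding-hump argument'' balancing weak*-nullity against a bidual lower bound; the construction is not carried out, and your own (correct) observations that the naive candidate functionals fail show that something genuinely non-trivial is required here. As it stands, the proof is therefore incomplete at its decisive step. The gap can be closed either by citing \cite{johnson}, or, in this concrete case, by a short direct argument: the diagonal map $J\colon\ell_1\to V$, $a\mapsto\bigl((a_1,\dots,a_n)\bigr)_{n=1}^\infty$, is an isometry, and if $\mathcal{U}$ is a free ultrafilter on $\N$, then $Q\colon V\to\ell_1$, $(z^{(n)})_{n=1}^\infty\mapsto\bigl(\lim_{n\to\mathcal{U}}z^{(n)}_k\bigr)_{k=1}^\infty$, is a norm-one left inverse of $J$ because $\sum_{k=1}^K\bigl|\lim_{n\to\mathcal{U}}z^{(n)}_k\bigr|=\lim_{n\to\mathcal{U}}\sum_{k=1}^K\bigl|z^{(n)}_k\bigr|\leqslant\sup_n\|z^{(n)}\|_1$ for every $K\in\N$. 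Hence $\ell_1$ is $1$-complemented in $V$; since $\ell_1$ is separable and non-reflexive, it is not a Grothendieck space (every separable Grothendieck space is reflexive), and since the Grothendieck property passes to complemented subspaces, neither $V$ nor $\mathscr{B}(X)$ is a Grothendieck space.
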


This paper is organized as follows: in Section~\ref{section3}, we prove Theorem~\ref{mainthm}, followed by a~discussion of the key difference between the two cases (see Proposition~\ref{unbdddiagop} and the paragraph preceding it for details), before we conclude with a short section listing some related open problems.

\section{The proof of Theorem~\ref{mainthm}}\label{section3}
\noindent
The proof that $\mathscr{B}(T)$ is not a Grothen\-dieck space relies on abstracting  the strategy used to prove Theorem~\ref{kania}.
The following notion will play a key role in this approach. Let~$E$ be a Banach space
with a normalized, $1$-unconditional basis $(e_n)_{n=1}^\infty$ (where `$1$-un\-con\-di\-tion\-al' means that $\bigl\|\sum_{j=1}^n\alpha_j\beta_j e_j\bigr\|\leqslant \max_{1\leqslant j\leqslant n}|\alpha_j|\cdot\bigl\|\sum_{j=1}^n\beta_j e_j\bigr\|$ for each $n\in\N$ and all choices of scalars $\alpha_1,\ldots,\alpha_n,\beta_1,\ldots,\beta_n$.) 
The $E$-\emph{direct sum} of a sequence $(X_n)_{n=1}^\infty$ of Banach spaces is  given by
\[ \Bigl(\bigoplus_{n\in\N} X_n\Bigr)_E = \biggl\{ (x_n) : x_n\in X_n\ (n\in\N)\ \text{and the series}\ \sum_{n=1}^\infty \|x_n\| e_n\ \text{converges in}\ E\biggr\}. \]
This is a Banach space with respect to the coordinate-wise defined operations and the norm
\[ \| (x_n) \|  = \biggl\|\sum_{n=1}^\infty \|x_n\| e_n\biggr\|. \]
Analogously, we write $\bigl(\bigoplus_{n\in\N} X_n\bigr)_{\ell_\infty}$ for the Banach space of uniformly bounded se\-quences $(x_n)$ with $x_n\in X_n$ for each $n\in\N$, equipped with the coordinate-wise defined operations and the norm $\| (x_n)\| = \sup_n\|x_n\|$.\smallskip

The main property of the $E$-direct sum that we shall require is that every uniformly bounded sequence $(U_n)_{n=1}^\infty$ of operators, where $U_n\in\mathscr{B}(X_n)$ for each $n\in\N$, induces a~`diagonal operator'~$\operatorname{diag}(U_n)$ on $\bigl(\bigoplus_{n\in\N} X_n\bigr)_E$ by the definition
\[ \operatorname{diag}(U_n) (x_n)_{n=1}^\infty = (U_nx_n)_{n=1}^\infty, \]
and $\|\operatorname{diag}(U_n)\| = \sup_n\|U_n\|$.\smallskip

We shall also use the following result of W.~B.~Johnson~\cite{johnson}.

\begin{theorem}[Johnson]\label{johnsonell1thm}
The Banach space~$\bigl(\bigoplus_{n\in\N}\ell_1^n\bigr)_{\ell_\infty}$ contains a complemented copy
of~$\ell_1$. Hence a Banach space that contains a~complemented copy of~$\bigl(\bigoplus_{n\in\N}\ell_1^n\bigr)_{\ell_\infty}$ is not a~Grothen\-dieck space. 
\end{theorem}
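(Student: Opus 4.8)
The statement has two parts, and I would treat the first as the substantive one. Write $Z = \bigl(\bigoplus_{n\in\N}\ell_1^n\bigr)_{\ell_\infty}$, let $(e_k^{(n)})_{k=1}^n$ denote the unit vector basis of $\ell_1^n$, and let $(e_k^{*(n)})_{k=1}^n$ be the corresponding coordinate functionals in $\ell_\infty^n = (\ell_1^n)^*$. The plan is to exhibit an explicit isometric copy of $\ell_1$ inside $Z$ together with a norm-one projection onto it. For each $k\in\N$ I would define $z_k\in Z$ by letting its $n^{\text{th}}$ coordinate be $e_k^{(n)}$ when $n\geqslant k$ and $0$ otherwise; since $\|e_k^{(n)}\|_{\ell_1^n}=1$, we have $z_k\in Z$ with $\|z_k\|=1$. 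For a finitely supported scalar sequence $(a_k)$ the $n^{\text{th}}$ coordinate of $\sum_k a_k z_k$ is $\sum_{k\leqslant n} a_k e_k^{(n)}$, whose $\ell_1^n$-norm is $\sum_{k\leqslant n}|a_k|$; taking the supremum over $n$ gives $\bigl\|\sum_k a_k z_k\bigr\| = \sum_k |a_k|$, so $W := \overline{\operatorname{span}}\{z_k : k\in\N\}$ is isometric to $\ell_1$.

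The crux is to produce a bounded projection of $Z$ onto $W$, and here the obstacle is that the vectors $z_k$ are spread across infinitely many coordinates, so there is no single coordinate $n$ from which to read off the ``$k^{\text{th}}$ coefficient''. I would circumvent this by fixing a free ultrafilter $\mathcal{U}$ on $\N$ and defining, for $z = (z_n)\in Z$, the functional $z_k^*(z) = \lim_{\mathcal{U}} \langle e_k^{*(n)}, z_n\rangle$; this limit exists and is bounded in modulus by $\|z\|$, and a short computation gives $z_k^*(z_j) = \delta_{jk}$. The key estimate is summability: for each $m$ one has $\sum_{k\leqslant m}|z_k^*(z)| = \lim_{\mathcal{U}} \sum_{k\leqslant m}|\langle e_k^{*(n)}, z_n\rangle| \leqslant \lim_{\mathcal{U}} \|z_n\|_{\ell_1^n} \leqslant \|z\|$, where the interchange of the finite sum with the ultrafilter limit is the one point requiring care. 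Letting $m\to\infty$ yields $\sum_k |z_k^*(z)| \leqslant \|z\|$, so $Pz := \sum_k z_k^*(z)\, z_k$ converges in $Z$ with $\|Pz\| = \sum_k |z_k^*(z)| \leqslant \|z\|$. Since $Pz_j = z_j$, the map $P$ is a norm-one projection onto $W$, exhibiting $W\cong\ell_1$ as a complemented subspace of $Z$.

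For the second assertion I would argue by transitivity. Complementation is transitive, so if a Banach space $Y$ contains a complemented copy of $Z$, then, by the first part, $Y$ contains a complemented copy of $\ell_1$. Now $\ell_1$ is not a Grothendieck space: in $\ell_1^* = \ell_\infty$ the partial-sum functionals $f_n = \mathbf 1_{\{1,\ldots,n\}}$ converge weak* to $\mathbf 1 = (1,1,\ldots)$, because $\langle x, f_n\rangle \to \sum_k x_k$ for every $x\in\ell_1$, yet they do not converge weakly, since the functional $g\mapsto \lim_{\mathcal{U}} g_n$ on $\ell_\infty$ sends each $f_n$ to $0$ but $\mathbf 1$ to $1$. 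As the Grothendieck property passes to complemented subspaces (as noted in the introduction), $Y$ cannot be a Grothendieck space, for otherwise its complemented subspace $\ell_1$ would be one.
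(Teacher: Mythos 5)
Your proof is correct, but note that the paper itself contains no proof of this statement to compare against: it is imported wholesale as a theorem of W.~B.~Johnson, with a citation to his 1972 Israel J.~Math.\ paper, and the second sentence is left to follow from the remark in the introduction that the Grothendieck property passes to quotients, hence to complemented subspaces. What you have written is therefore a self-contained substitute for that citation, and it is essentially the standard argument in disguise: your vectors $z_k$ are exactly the images of the unit vector basis of $\ell_1$ under the embedding $x\mapsto (P_nx)_{n=1}^\infty$, where $P_n\colon\ell_1\to\ell_1^n$ is the $n^{\text{th}}$ basis projection, and your functionals $z_k^*$ are the coordinates of the weak*-limit along $\mathcal{U}$ (in the $\sigma(\ell_1,c_0)$-topology), so your projection $P$ is the usual retraction obtained from weak* compactness of the ball of $\ell_1=c_0^*$. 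The points you flagged do hold: ultrafilter limits of bounded sequences are additive and commute with taking absolute values, so the interchange of the finite sum with $\lim_{\mathcal{U}}$ is legitimate; the estimate $\sum_{k\leqslant m}|\langle e_k^{*(n)},z_n\rangle|\leqslant\|z_n\|_{\ell_1^n}$ is valid for all $n\geqslant m$, a set belonging to the free ultrafilter $\mathcal{U}$, which is all one needs (and the undefined terms with $n<k$, read as $0$, are likewise irrelevant since $\mathcal{U}$ contains all cofinite sets); and the convergence of $\sum_k z_k^*(z)z_k$ follows from your isometric formula applied to partial sums. Your direct verification that $\ell_1$ is not a Grothendieck space, via the weak*-convergent sequence $(f_n)$ separated from its weak*-limit by a Banach-limit-type functional, is also correct; a shortcut consistent with the paper's framework would be to note that $\ell_1$ is separable and non-reflexive, while separable Grothendieck spaces are reflexive (weak* and weak convergence of a dense-range dual ball argument), or simply that $\ell_1$ has the Schur property.
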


\begin{lemma}\label{lemmaKaniaAbstractMethod}
Let $X = \bigl(\bigoplus_{n\in\N} X_n\bigr)_E$, where~$E$ is a Banach space with a normalized, $1$\nobreakdash-un\-condi\-tional basis and $(X_n)$ is a sequence of Banach spaces. Then $\mathscr{B}(X)$ contains a complemented sub\-space which is isometrically isomorphic to~$\bigl(\bigoplus_{n\in\N} X_n\bigr)_{\ell_\infty}$.
\end{lemma}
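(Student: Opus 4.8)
The plan is to realise $\bigl(\bigoplus_{n\in\N} X_n\bigr)_{\ell_\infty}$ inside $\mathscr{B}(X)$ as the range of a family of rank-one diagonal operators, and then to construct an explicit norm-one projection onto that range. Assuming each $X_n$ is non-zero (zero coordinates being handled trivially), I would first fix for every $n\in\N$ a norm-one vector $\xi_n\in X_n$ together with, by the Hahn--Banach theorem, a norm-one functional $\varphi_n\in X_n^*$ satisfying $\varphi_n(\xi_n)=1$. Each bounded sequence $(x_n)$ with $x_n\in X_n$ then gives rise to the rank-one operators $U_n\in\mathscr{B}(X_n)$ defined by $U_ny=\varphi_n(y)x_n$, and these satisfy $\|U_n\|=\|\varphi_n\|\,\|x_n\|=\|x_n\|$, so that $\sup_n\|U_n\|=\|(x_n)\|<\infty$. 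The map
\[ \Phi\colon (x_n)\longmapsto \operatorname{diag}(U_n) \]
is therefore well defined into $\mathscr{B}(X)$, and it is plainly linear.

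Using the norm identity $\|\operatorname{diag}(U_n)\|=\sup_n\|U_n\|$ recorded before the lemma, I would next verify that $\Phi$ is an isometry, since
\[ \|\Phi((x_n))\|=\sup_n\|U_n\|=\sup_n\|x_n\|=\|(x_n)\|. \]
This already shows that the range of $\Phi$ is a subspace of $\mathscr{B}(X)$ isometrically isomorphic to $\bigl(\bigoplus_{n\in\N} X_n\bigr)_{\ell_\infty}$, and it remains only to see that this range is complemented.

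For the projection, let $J_n\colon X_n\to X$ and $Q_n\colon X\to X_n$ denote the canonical coordinate inclusion and projection; these have norm one because $(e_n)$ is normalized and $1$-unconditional, and they satisfy $Q_nJ_n=\operatorname{id}_{X_n}$. I would then define a map $\psi\colon\mathscr{B}(X)\to\bigl(\bigoplus_{n\in\N} X_n\bigr)_{\ell_\infty}$ by $\psi(V)=(Q_nVJ_n\xi_n)_{n=1}^\infty$, the estimate $\|Q_nVJ_n\xi_n\|\leqslant\|V\|$ showing that $\psi$ is well defined and bounded with $\|\psi\|\leqslant 1$. A direct computation using $\varphi_n(\xi_n)=1$ gives $Q_n\Phi((x_n))J_n\xi_n=\varphi_n(\xi_n)x_n=x_n$, so that $\psi\circ\Phi=\operatorname{id}$. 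Consequently $P=\Phi\circ\psi$ is a bounded idempotent on $\mathscr{B}(X)$ whose range coincides with the range of $\Phi$, which completes the argument.

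The construction is essentially routine once the rank-one diagonal operators are in place, and I do not anticipate a genuine obstacle. The one point that requires care is the choice of the normalizing pair $(\xi_n,\varphi_n)$ in each coordinate: it is precisely the relation $\varphi_n(\xi_n)=1$ that makes $\psi$ a left inverse of $\Phi$, and hence guarantees both that $\Phi$ is a (complemented) isometric embedding and that $P$ is a projection onto its image.
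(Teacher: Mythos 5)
Your proposal is correct and follows essentially the same route as the paper's own proof: the same rank-one diagonal operators $(x_n)\mapsto\operatorname{diag}(x_n\otimes\varphi_n)$ built from normalized pairs $(\xi_n,\varphi_n)$ with $\varphi_n(\xi_n)=1$, the same isometry argument via $\|\operatorname{diag}(U_n)\|=\sup_n\|U_n\|$, and the same left inverse $V\mapsto(Q_nVJ_n\xi_n)_{n=1}^\infty$ yielding the projection. The only cosmetic difference is that you spell out the idempotent $P=\Phi\circ\psi$ explicitly, whereas the paper simply notes that a bounded linear left inverse suffices for complementation.
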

\begin{proof} We may suppose that $X_n$ is non-zero for each $n\in\N$. Let $n\in\N$, and take $w_n\in X_n$ and $f_n\in\ X_n^*$ such that $\|                                                    
w_n\| = \| f_n\| = 1 = \langle w_n, f_n\rangle$.  For $x_n\in X_n$,
the rank-one operator given by
\[ x_n\otimes f_n\colon y\mapsto \langle y,f_n\rangle x_n,\quad X_n\to X_n, \]
has the same norm as~$x_n$, so the map
\begin{equation}\label{defnDeltaT} \Delta\colon\ (x_n)\mapsto \operatorname{diag}(x_n\otimes f_n),\quad \Bigl(\bigoplus_{n\in\N} X_n\Bigr)_{\ell_\infty}\to\mathscr{B}(X), \end{equation}
is an isometry. It is clearly linear, and therefore the image of~$\Delta$ is
a subspace of~$\mathscr{B}(X)$ which is isometrically isomorphic
to~$\bigl(\bigoplus_{n\in\N} X_n\bigr)_{\ell_\infty}$.  This
subspace is complemented in~$\mathscr{B}(X)$ because~$\Delta$ has a bounded, linear left
inverse, namely the map given by
\[ U\mapsto (Q_nUJ_nw_n)_{n=1}^\infty,\quad
\mathscr{B}(X)\to\Bigl(\bigoplus_{n\in\N} X_n\Bigr)_{\ell_\infty}, \] where $J_n\colon X_n\to X$ and $Q_n\colon X\to X_n$ denote the  $n^{\text{th}}$ coordinate embedding and projection, respectively. 
\end{proof}

\begin{corollary}\label{corKaniaAbstractMethod} Let~$X$ be a Banach space which contains a complemented subspace that is iso\-mor\-phic to $\bigl(\bigoplus_{n\in\N} \ell_1^{m_n}\bigr)_E$ for some unbounded sequence $(m_n)$ of natural numbers and some Banach space~$E$ with a normalized, $1$-unconditional basis. Then $\mathscr{B}(X)$ is not a~Gro\-then\-dieck space. 
\end{corollary}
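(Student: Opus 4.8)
The plan is to chain together the tools already established, ultimately reducing everything to Johnson's theorem (Theorem~\ref{johnsonell1thm}). Write $Y$ for a complemented subspace of~$X$ that is isomorphic to $Z := \bigl(\bigoplus_{n\in\N}\ell_1^{m_n}\bigr)_E$, and fix an inclusion $\iota\colon Y\to X$ together with a projection $\pi\colon X\to Y$ satisfying $\pi\iota = I_Y$. The first observation I would record is that the map $S\mapsto \iota S\pi$ embeds $\mathscr{B}(Y)$ as a complemented subspace of $\mathscr{B}(X)$: it is a bounded linear injection, and $U\mapsto \pi U\iota$ is a bounded left inverse for it. Since any isomorphism $Y\cong Z$ induces a Banach-space isomorphism $\mathscr{B}(Y)\cong\mathscr{B}(Z)$ (via conjugation), it suffices to show that $\mathscr{B}(Z)$ is not a Grothen\-dieck space, because the class of Grothen\-dieck spaces is stable under passing to complemented subspaces, as noted in the introduction.

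Next I would apply Lemma~\ref{lemmaKaniaAbstractMethod} with $X_n = \ell_1^{m_n}$, which shows that $\mathscr{B}(Z)$ contains a complemented subspace isometrically isomorphic to $W := \bigl(\bigoplus_{n\in\N}\ell_1^{m_n}\bigr)_{\ell_\infty}$. In view of the second assertion of Theorem~\ref{johnsonell1thm}, it then remains only to produce a complemented copy of $\bigl(\bigoplus_{n\in\N}\ell_1^n\bigr)_{\ell_\infty}$ inside $W$; concatenating the resulting chain of complemented embeddings and invoking Johnson's theorem will finish the argument.

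The heart of the matter is therefore this last embedding, and it is here that the unboundedness of $(m_n)$ enters. Since $(m_n)$ is unbounded, I can select a subsequence $(m_{n_k})_{k=1}^\infty$ with $m_{n_k}\geqslant k$ for every $k$. For each~$k$, the span of the first $k$ basis vectors realizes $\ell_1^k$ as a norm-one complemented subspace of $\ell_1^{m_{n_k}}$, through the inclusion $I_k\colon\ell_1^k\to\ell_1^{m_{n_k}}$ and the coordinate projection $R_k\colon\ell_1^{m_{n_k}}\to\ell_1^k$ with $R_kI_k = I_{\ell_1^k}$. Because coordinate projections and zero-padding inclusions on an $\ell_\infty$-sum have norm one and preserve the supremum norm, the maps $(y_k)\mapsto(I_ky_k)$ and $(z_k)\mapsto(R_kz_k)$ exhibit $\bigl(\bigoplus_{k}\ell_1^k\bigr)_{\ell_\infty}$ as an isometric, $1$-complemented subspace of $\bigl(\bigoplus_{k}\ell_1^{m_{n_k}}\bigr)_{\ell_\infty}$, which in turn is $1$-complemented in~$W$ via the subsequence coordinate projection. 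After relabeling, $\bigl(\bigoplus_{k}\ell_1^k\bigr)_{\ell_\infty}$ is precisely $\bigl(\bigoplus_{n}\ell_1^n\bigr)_{\ell_\infty}$, so the chain is complete.

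I expect no serious obstacle here: each link is routine once one observes that the $\ell_\infty$-sum makes all the relevant coordinate maps norm one, so that the complementation constants do not accumulate. The only points requiring a little care are the passage from the isomorphism in the hypothesis to an isomorphism of the operator algebras, and the transitivity of ``being a complemented subspace'', which is what lets me concatenate the three embeddings into a single complemented copy of $\bigl(\bigoplus_{n}\ell_1^n\bigr)_{\ell_\infty}$ inside $\mathscr{B}(X)$ and then conclude via Theorem~\ref{johnsonell1thm}.
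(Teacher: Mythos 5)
Your proof is correct, and it follows the paper's skeleton -- Lemma~\ref{lemmaKaniaAbstractMethod}, the conjugation trick $S\mapsto VSU$ with left inverse $R\mapsto URV$ to transfer complementation from $\mathscr{B}\bigl(\bigl(\bigoplus_{n}\ell_1^{m_n}\bigr)_E\bigr)$ into $\mathscr{B}(X)$, and finally Johnson's Theorem~\ref{johnsonell1thm} -- but it diverges at one genuine point. Where the paper disposes of the discrepancy between $\bigl(\bigoplus_{n}\ell_1^{m_n}\bigr)_{\ell_\infty}$ and $\bigl(\bigoplus_{n}\ell_1^{n}\bigr)_{\ell_\infty}$ by asserting that the two spaces are \emph{isomorphic} via Pe\l{}czy\'{n}ski's decomposition method, you instead use the unboundedness of $(m_n)$ to pick a subsequence with $m_{n_k}\geqslant k$ and exhibit $\bigl(\bigoplus_{k}\ell_1^{k}\bigr)_{\ell_\infty}$ as an explicit $1$-complemented subspace of $\bigl(\bigoplus_{n}\ell_1^{m_n}\bigr)_{\ell_\infty}$ through coordinate inclusions and projections, all of which are norm one in an $\ell_\infty$-sum. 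This is a sound trade-off: your route is more elementary and self-contained, since it avoids invoking the decomposition method (whose hypotheses -- mutual complemented embeddings plus an isomorphism of one space with an $\ell_\infty$-sum of copies of itself -- would themselves need checking), and it delivers exactly what Theorem~\ref{johnsonell1thm} asks for, namely a complemented copy, which is preserved under the transitive chain of complemented embeddings you describe. What the paper's version buys in exchange is the stronger structural fact that the two $\ell_\infty$-sums are actually isomorphic as Banach spaces, not merely that one sits complementably inside the other; for the purposes of this corollary that extra strength is never used.
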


\begin{proof}
Let $Y= \bigl(\bigoplus_{n\in\N} \ell_1^{m_n}\bigr)_E$. Lemma~\ref{lemmaKaniaAbstractMethod} implies that~$\mathscr{B}(Y)$
contains a complemented copy of~$\bigl(\bigoplus_{n\in\N}\ell_1^{m_n}\bigr)_{\ell_\infty}$, which is isomorphic to~$\bigl(\bigoplus_{n\in\N}\ell_1^n\bigr)_{\ell_\infty}$ by Pe\l{}czy\'{n}ski's decomposition method, and therefore~$\mathscr{B}(Y)$ is not a Grothen\-dieck space by Theorem~\ref{johnsonell1thm}. The assump\-tion means that we can find bounded operators \mbox{$U\colon X\to Y$} and $V\colon Y\to X$ such that $UV = I_Y$. This implies that~$\mathscr{B}(X)$ contains a complemented copy of~$\mathscr{B}(Y)$ because the operator \mbox{$R\mapsto URV,$} \mbox{$\mathscr{B}(X)\to\mathscr{B}(Y),$} is a left inverse of \mbox{$S\mapsto VSU,$} \mbox{$\mathscr{B}(Y)\to\mathscr{B}(X)$}. There\-fore~$\mathscr{B}(X)$ is  not a Grothen\-dieck space.
\end{proof}

Following Figiel and Johnson~\cite{FJ}, we use the term `the Tsirelson space' and the symbol~$T$ to denote the dual of the reflexive Banach space originally constructed by Tsirelson~\cite{T} with the property that it does not contain any of the classical sequence spaces~$c_0$ and~$\ell_p$ for $1\leqslant p<\infty$. We refer to~\cite{CS} for an attractive introduction to the Tsirelson space, including background information, its formal definition and a comprehensive account of what was known about it up until the late 1980's.\smallskip  

The following notion plays a key role in the study of the Tsirelson space (and in the definition of the Baern\-stein spaces, to be given below). 
\begin{definition} A non-empty, finite subset $M$ of~$\N$ is  \emph{(Schreier-)admissible} if \mbox{$|M|\leqslant\min M$}, where $|M|$ denotes the cardinality of~$M$. 
\end{definition}

\begin{proof}[Proof of Theorem~{\normalfont{\ref{mainthm}}} for $X=T$]  
The unit vector basis is a normalized, $1$-un\-condi\-tional basis for~$T$. We shall denote it by $(t_n)_{n = 1}^\infty$ throughout this proof.
Take natural numbers \mbox{$1=m_1\leqslant k_1<m_2\leqslant k_2<m_3\leqslant k_3<\cdots$}, 
and set 
\[ M_n = [m_n,m_{n+1})\cap\N\qquad\text{and}\qquad 
   F_n = \operatorname{span}\{ t_j : j\in M_n\}\qquad (n\in\N), \]
so that $(F_n)$ is an unconditional finite-dimensional Schauder decomposition of~$T$. 
For $x_n\in F_n\ (n\in\N)$, \cite[Corollary~7(i)]{CJT} shows that the series $\sum_{n=1}^\infty x_n$ converges in~$T$ if and only if the series $\sum_{n=1}^\infty \|x_n\| t_{k_n}$ converges in~$T$, and when they both converge, the norms of their sums are related by
\[ \frac{1}{3}\,\biggl\|\sum_{n=1}^\infty \|x_n\| t_{k_n}\biggr\|\leqslant
\biggl\| \sum_{n=1}^\infty x_n\biggr\|\leqslant 18\,\biggl\|\sum_{n=1}^\infty
\|x_n\| t_{k_n}\biggr\|. \] Consequently~$T$ is $54$-isomorphic to the direct sum $\bigl(\bigoplus_{n\in\N} F_n\bigr)_E$, where $E$ denotes the closed linear span of $\{ t_{k_n} : n\in\N\}$.
Taking $m_n = 2^{n-1}$ for each $n\in\N$, we see that the sets $M_n$ are admissible, which implies that~$F_n$ is $2$-isomorphic to~$\ell_1^{m_n}$ for each $n\in\N$. Hence~$T$ is $108$-isomorphic to $\bigl(\bigoplus_{n\in\N} \ell_1^{m_n}\bigr)_E$, and the conclusion follows from Corollary~\ref{corKaniaAbstractMethod}.
\end{proof}
We shall now turn our attention to the Baern\-stein spaces~$B_p$ for $1<p<\infty$. As the name suggests, they originate in the work of Baern\-stein, who introduced the space that we call~$B_2$ in~\cite{baern}, while the  variant for general~$p$ is due to Seifert~\cite{seifert}. 
These spaces can be viewed as a natural precursor of the Tsirelson space, as the account in~\cite[Chapter 0]{CS} highlights. \smallskip

The main reason for our interest in the Baern\-stein spaces in the present context is that while the conclusion of Theorem~\ref{mainthm} remains valid for them, the method of proof that we used to establish Theorem~\ref{mainthm} for the Tsirelson space does not carry over. We shall return to this point in Proposition~\ref{unbdddiagop} below, after we have given the formal definition of the Baern\-stein spaces and shown how to deduce Theorem~\ref{mainthm} for them.\smallskip

In the remainder of this section, we fix a number $p\in(1,\infty)$. For $x = (\alpha_n)_{n=1}^\infty\in c_{00}$ (the vector space of finitely supported scalar sequences), $k\in\N$ and $N_1,\ldots, N_k\subseteq\N$, let 
\[ \nu_p(x; N_1,\ldots, N_k) = \biggl(\sum_{j=1}^k\mu(x,N_j)^p\biggr)^{\frac{1}{p}},\qquad\text{where}\qquad \mu(x,N_j) = \sum_{n\in N_j} |\alpha_n|. \] 
Given two non-empty subsets $M$ and $N$ of~$\N$, where $M$ is finite, we use the notation $M<N$ to indicate that $\max M<\min N$.  
The $p^{\text{th}}$ \emph{Baern\-stein space}~$B_p$ can now be defined as the completion of~$c_{00}$ with respect to the norm
\begin{multline}\label{defnBaernsteinnorm} \| x\|_{B_p} = \sup\{ \nu_p(x;N_1,\ldots, N_k) : k\in\N\ \text{and}\\ N_1<N_2<\cdots<N_k\ \text{are admissible subsets of}\ \N\}. \end{multline}

As we have already mentioned, Baern\-stein introduced the Banach space~$B_2$ in~\cite{baern} and observed that it is reflexive and the unit vector basis of~$c_{00}$, which we shall here denote by~$(b_n)_{n=1}^\infty$, is a normalized,  $1$-unconditional basis for it. His proofs carry over immediately to general~$p$. We write $(b_n^*)_{n=1}^\infty$ for the sequence of coordinate functionals corresponding to the basis~$(b_n)_{n=1}^\infty$.\smallskip 

In analogy with the proof of Theorem~\ref{mainthm} for the Tsirelson space given above, we shall define $m_n = 2^{n-1}$ and $M_n = [m_n,m_{n+1})\cap\N$ for each $n\in\N$, and we shall then consider the finite-dimensional blocking 
\begin{equation}\label{defnFn} F_n = \operatorname{span}\{ b_j : j\in M_n\}\qquad (n\in\N) \end{equation}
of the unit vector basis for~$B_p$. For later reference, we remark that $F_n$ is isometrically isomorphic to~$\ell_1^{m_n}$ because $M_n$ is admissible for each $n\in\N$.\smallskip  

Let $\bigl(\bigoplus_{n\in\N} F_n\bigr)_{c_{00}}$ denote the vector space of sequences $(x_n)$ such that $x_n\in F_n$ for each $n\in\N$ and $x_n = 0$ eventually. For  $(x_n)\in\bigl(\bigoplus_{n\in\N} F_n\bigr)_{c_{00}}$, we set
\begin{equation}\label{defnDelta} \Delta(x_n) = \sum_{n=1}^\infty x_n\otimes
  b^*_{m_{n+1}-1}, \end{equation} 
where we note that the sum is finite, so that~$\Delta(x_n)$ defines a finite-rank operator on~$B_p$.

\begin{lemma}\label{normestimateofDelta} For each $(x_n)\in\bigl(\bigoplus_{n\in\N} F_n\bigr)_{c_{00}},$  
\[ \max_{n\in\N}\| x_n\|_{B_p}\leqslant \|\Delta(x_n)\|\leqslant\sqrt[p]{3}\max_{n\in\N}\| x_n\|_{B_p}. \]
\end{lemma}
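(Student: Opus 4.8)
The plan is to compute the operator norm of $\Delta(x_n)$ directly. For $y\in B_p$ write $\gamma_n=\langle y,b^*_{m_{n+1}-1}\rangle$, so that $\Delta(x_n)y=\sum_{n}\gamma_n x_n$; note that $m_{n+1}-1=\max M_n=2^n-1$, so $\gamma_n$ is simply the coordinate of $y$ at the top of the block $M_n$. The lower bound is immediate: applying $\Delta(x_n)$ to the norm-one basis vector $b_{m_{n+1}-1}$ returns $x_n$ (only the $n^{\text{th}}$ summand survives), whence $\|\Delta(x_n)\|\geqslant\|x_n\|_{B_p}$ for every $n$, and hence $\|\Delta(x_n)\|\geqslant\max_n\|x_n\|_{B_p}$. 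For the upper bound I would set $c=\max_n\|x_n\|_{B_p}$, fix $y$ with $\|y\|_{B_p}\leqslant1$, and put $z=\sum_n\gamma_n x_n$ (a finitely supported vector, since $(x_n)\in c_{00}$). It then suffices to show $\|z\|_{B_p}^p\leqslant 3c^p$, i.e.\ that $\sum_{j}\mu(z,N_j)^p\leqslant 3c^p$ for every admissible family $N_1<N_2<\cdots<N_k$.

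Two structural facts drive the estimate. First, as already noted, $F_n$ is isometric to $\ell_1^{m_n}$, so $\|x_n\|_{B_p}=\|x_n\|_{\ell_1}=\mu(x_n,M_n)\leqslant c$, and the masses $\mu(x_n,N_j)$ of $x_n$ over the disjoint sets $N_j$ sum to at most $c$. Secondly, if an admissible set $N$ contained a whole block $M_n$, then $|N|\leqslant\min N\leqslant\min M_n=|M_n|$ would force $N=M_n$; consequently \emph{every} admissible $N_j$ meets at most two consecutive blocks. I would therefore split the family into \emph{pure} sets, contained in a single block, and \emph{mixed} sets, meeting two consecutive blocks $M_n,M_{n+1}$; the ordering $N_1<\cdots<N_k$ shows that at most one mixed set straddles each boundary. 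Since $\mu(z,N_j)=\sum_{n}|\gamma_n|\mu(x_n,N_j)$, with the sum running over the (one or two) blocks met by $N_j$, testing $y$ against the admissible singletons $\{m_{n+1}-1\}$ also gives $\sum_n|\gamma_n|^p\leqslant\|y\|_{B_p}^p\leqslant1$.

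For the pure sets, grouping them by the block $M_n$ that contains them and using $\sum_j a_j^p\leqslant(\sum_j a_j)^p$ for $p\geqslant1$ together with the mass bound yields $\sum_{\text{pure}}\mu(z,N_j)^p\leqslant\sum_n|\gamma_n|^p\,\mu(x_n,M_n)^p\leqslant c^p\sum_n|\gamma_n|^p\leqslant c^p$. For a mixed set straddling $M_n,M_{n+1}$ one has $\mu(z,N_j)=|\gamma_n|\mu(x_n,N_j)+|\gamma_{n+1}|\mu(x_{n+1},N_j)\leqslant c(|\gamma_n|+|\gamma_{n+1}|)$, because each block-mass is at most $c$. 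The decisive point is that the two-element sets $\{m_{n+1}-1,\,m_{n+2}-1\}$ attached to the mixed boundaries are themselves admissible: their minimum $m_{n+1}-1=2^n-1\geqslant 3$ (the boundary $n=1$ carries no mixed set, as such a set would have minimum $1$ and cardinality $\geqslant2$) exceeds their cardinality $2$. Splitting the mixed boundaries into those with $n$ even and those with $n$ odd then produces two families of pairwise disjoint, increasing, admissible sets, each of which tests $y$; hence $\sum_{n\ \mathrm{even}}(|\gamma_n|+|\gamma_{n+1}|)^p\leqslant\|y\|_{B_p}^p\leqslant1$ and likewise for odd $n$, giving $\sum_{\text{mixed}}\mu(z,N_j)^p\leqslant 2c^p$. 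Adding the pure and mixed contributions yields $\sum_j\mu(z,N_j)^p\leqslant 3c^p$, which is exactly the claimed $\|\Delta(x_n)\|\leqslant\sqrt[p]{3}\,\max_n\|x_n\|_{B_p}$.

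The main obstacle is precisely the control of the mixed sets: a naive application of convexity to $(|\gamma_n|+|\gamma_{n+1}|)^p$ would introduce a factor $2^{p-1}$ and ruin the constant for large~$p$. What rescues the argument is the combinatorial rigidity coming from admissibility — a block can only be captured by the admissible set equal to it, so each test set touches at most two blocks and each boundary carries at most one straddling set — together with the even/odd splitting, which reinterprets the entire mixed contribution as two honest admissible tests of~$y$. I expect the only mildly delicate bookkeeping to be the admissibility of the pair-sets $\{m_{n+1}-1,m_{n+2}-1\}$ and the disjointness of the even and odd subfamilies, both of which reduce to the explicit values $m_n=2^{n-1}$.
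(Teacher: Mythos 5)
Your lower bound is fine and agrees with the paper's. The upper bound, however, rests on a false structural claim: that every admissible test set $N_j$ meets at most two consecutive blocks $M_n$. Your justification (an admissible set containing a whole block must equal that block) is correct, but it only rules out an admissible \emph{interval} meeting three or more blocks. The admissible sets in the definition~\eqref{defnBaernsteinnorm} of $\|\cdot\|_{B_p}$ are arbitrary finite sets with $|N|\leqslant\min N$; they need not be intervals. For instance, $N=\{4,8,16\}$ is admissible and meets the three blocks $M_3=[4,8)$, $M_4=[8,16)$ and $M_5=[16,32)$ while containing none of them; more generally, an admissible set with minimum $2^{n-1}$ may have one point in each of $2^{n-1}$ distinct blocks. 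Such sets are neither ``pure'' nor ``mixed'' in your taxonomy, so your case analysis does not cover them, and they cannot be reduced to covered cases: splitting a test set $N$ into the pieces $N\cap M_k$ only \emph{decreases} $\sum_j\mu(z,N_j)^p$, since $\bigl(\sum_k a_k\bigr)^p\geqslant\sum_k a_k^p$, so the supremum defining $\|z\|_{B_p}$ is driven precisely by test sets spreading across many blocks. Concretely, if each $x_n$ is supported on a single coordinate $j_n\in M_n$, then a single admissible set $N=\{j_n : n\in S\}$ yields the contribution $\bigl(\sum_{n\in S}|\gamma_n|\bigr)^p$, which your per-block accounting (bounding things by sums of the form $\sum_n|\gamma_n|^p$) never controls.

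This is exactly the difficulty the paper's proof is built to overcome. For an arbitrary admissible $N$ it forms $\operatorname{uep}(N)$, the set of upper endpoints $m_{k+1}-1$ of \emph{all} blocks $M_k$ meeting $N$, notes that $\operatorname{uep}(N)$ is again admissible (because $\min\operatorname{uep}(N)\geqslant\min N\geqslant|N|\geqslant|\!\operatorname{uep}(N)|$), and proves $\mu(\Delta(x_n)y,N)\leqslant\mu(y,\operatorname{uep}(N))$; this replaces your two-block estimate and is valid no matter how many blocks $N$ meets. The remaining problem --- that the sets $\operatorname{uep}(N_1),\ldots,\operatorname{uep}(N_k)$ need not be pairwise ordered, since consecutive ones can share endpoints --- is handled by first merging consecutive test sets lying in a common block and then splitting the family into the three subfamilies with indices congruent mod~$3$, each of which is then a genuine increasing admissible family testing $y$; that is where the constant $3$ actually comes from. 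Your even/odd splitting of boundary pairs is a shadow of this idea, but to repair your proof you would need to rebuild it around $\operatorname{uep}$-type sets rather than the pure/mixed dichotomy.
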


\begin{proof}
The lower bound is clear because, for each $k\in\N$, $b_{m_{k+1}-1}$ is a unit vector in~$B_p$, and $\Delta(x_n)b_{m_{k+1}-1} = x_k$.\smallskip  

It suffices to establish the upper bound in the case where $\max_{n\in\N}\| x_n\|_{B_p} = 1$. Then $\mu(x_n,N)\leqslant 1$ for each $n\in\N$ and $N\subseteq\N$ because the support of~$x_n$ is admissible.\smallskip   

For $N\subseteq\N$, we introduce the set
\[ \operatorname{uep}(N) = \{m_{k+1}-1 : k\in\N,\,N\cap M_k\ne\emptyset\}, \] 
which consists of the upper end points (hence the acronym `uep') of the intervals~$M_k$ that $N$ intersects. For later reference, we remark that $\operatorname{uep}(N)$ is ad\-missible whenever~$N$ is admissible because
\[ \min\operatorname{uep}(N)\geqslant\min N\geqslant |N|\geqslant|\!\operatorname{uep}(N)|. \]

Suppose that $N\subseteq\N$ is non-empty and finite, and take a non-empty, finite subset~$K$ of~$\N$
such that $\operatorname{uep}(N) = \{ m_{k+1}-1 : k\in K\}$. Then~$N$ is the disjoint union of the family $\{ N\cap M_k : k\in K\}$, and for each $y\in B_p$, we have
\begin{align}
\mu(\Delta(x_n)y,N) &= \sum_{k\in K}\mu(\Delta(x_n)y,N\cap M_k) = \sum_{k\in K}|\langle y,b_{m_{k+1}-1}^*\rangle|\,\mu(x_k, N\cap M_k)\notag\\
&\leqslant \sum_{k\in K}|\langle y,b_{m_{k+1}-1}^*\rangle| = \mu(y,\operatorname{uep}(N)).\label{BnGstep4} 
\end{align}

Now let $N_1<N_2<\cdots<N_k$ be admissible subsets of~$\N$ for some $k\in\N$.  We aim to show that~$3\|y\|_{B_p}^p$ is an upper bound of the quantity
\begin{equation}\label{BnGeq5} \nu_p(\Delta(x_n)y; N_1,\ldots,N_k)^p = \sum_{j=1}^k\mu(\Delta(x_n)y,N_j)^p. \end{equation} 
By adding at most two extra sets beyond the final set~$N_k$, we may suppose that~$k$ is a~multiple of~$3$. Moreover, we may suppose that there is no $j\in\{1,\ldots,k-1\}$ such that $N_j\cup N_{j+1}\subseteq M_h$ for some $h\in\N$. Indeed, if there is, we can replace the sets $N_j$ and $N_{j+1}$ with their union $N_j\cup N_{j+1}$, which is still admissible, and this change will not decrease the value of~\eqref{BnGeq5} because
\begin{align*} \mu(\Delta(x_n)y,N_j)^p + \mu(\Delta(x_n)y,N_{j+1})^p &\leqslant 
\bigl(\mu(\Delta(x_n)y,N_j) + \mu(\Delta(x_n)y,N_{j+1})\bigr)^p\\ &=
\mu(\Delta(x_n)y, N_j\cup N_{j+1})^p. \end{align*}

Set $r_j = \min\operatorname{uep}(N_j)$ and $s_j = \max\operatorname{uep}(N_j)$ for each $j\in\{1,\ldots,k\}$. Then we
have $r_1\leqslant s_1\leqslant r_2\leqslant s_2\leqslant\cdots\leqslant r_k\leqslant s_k$ because
$N_1<N_2<\cdots<N_k$. A much less obvious fact is that $s_j<r_{j+3}$ for
each $j\in\{1,\ldots,k-3\}$ (provided that $k>3$). To verify this, we assume the contrary, so that $s_j=r_{j+3} = m_{h+1}-1$ for some $j\in\{1,\ldots,k-3\}$ and $h\in\N$. Then it follows that~$N_{j+1}$ and~$N_{j+2}$ are both contained
in~$M_h$, which contradicts the above assumption. In other words, we have
\[ \operatorname{uep}(N_1)<\operatorname{uep}(N_4)<\cdots<\operatorname{uep}(N_{k-2}),\qquad
\operatorname{uep}(N_2)<\operatorname{uep}(N_5)<\cdots<\operatorname{uep}(N_{k-1}) \]
and
\[ \operatorname{uep}(N_3)<\operatorname{uep}(N_6)<\cdots<\operatorname{uep}(N_{k}), \]
where each of the sets $\operatorname{uep}(N_j)$ is admissible. Hence, using~\eqref{BnGstep4}, we conclude that
\begin{align*}
\nu_p(\Delta(x_n)y; N_1,\ldots,N_k)^p &\leqslant
\sum_{j=1}^k\mu(y,\operatorname{uep}(N_j))^p\\ &= \nu_p(y;
\operatorname{uep}(N_1),\operatorname{uep}(N_4),\ldots,\operatorname{uep}(N_{k-2}))^p\\ &\qquad +
\nu_p(y;\operatorname{uep}(N_2),\operatorname{uep}(N_5),\ldots,\operatorname{uep}(N_{k-1}))^p\\ 
&\qquad + \nu_p(y;\operatorname{uep}(N_3),\operatorname{uep}(N_6),\ldots,\operatorname{uep}(N_{k}))^p\\ &\leqslant
3\,\|y\|_{B_p}^p,
\end{align*}
as required.
\end{proof}

\begin{corollary}\label{c0ofl1nsinKB}
The ideal $\mathscr{K}(B_p)$ of compact operators on~$B_p$ contains a complemented sub\-space that is isomorphic to~$\bigl(\bigoplus_{n\in\N}\ell_1^n\bigr)_{c_0}$.
\end{corollary}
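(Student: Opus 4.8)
The plan is to upgrade the map $\Delta$ from \eqref{defnDelta}, which is so far defined only on the finitely supported sequences $\bigl(\bigoplus_{n\in\N} F_n\bigr)_{c_{00}}$, into an isomorphism onto a complemented subspace of $\mathscr{K}(B_p)$. First I would observe that $\bigl(\bigoplus_{n\in\N} F_n\bigr)_{c_0}$ is precisely the completion of $\bigl(\bigoplus_{n\in\N} F_n\bigr)_{c_{00}}$ with respect to the norm $\|(x_n)\| = \sup_{n}\|x_n\|_{B_p}$, since the supremum norm is exactly the $c_0$-direct-sum norm and finitely supported sequences are dense in $c_0$. Lemma~\ref{normestimateofDelta} shows that on this dense subspace $\Delta$ satisfies
\[ \max_{n\in\N}\|x_n\|_{B_p}\leqslant\|\Delta(x_n)\|\leqslant\sqrt[p]{3}\,\max_{n\in\N}\|x_n\|_{B_p}, \]
so $\Delta$ is bounded below and bounded above with constants $1$ and $\sqrt[p]{3}$. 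By density it extends uniquely to a bounded linear map, still denoted $\Delta$, on all of $\bigl(\bigoplus_{n\in\N} F_n\bigr)_{c_0}$, and the same two-sided estimate persists on the completion; hence $\Delta$ is an isomorphism onto its image, which is therefore a subspace of $\mathscr{B}(B_p)$ isomorphic to $\bigl(\bigoplus_{n\in\N} F_n\bigr)_{c_0}$.

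Next I would check that this image actually lands in the compact operators $\mathscr{K}(B_p)$, not merely in $\mathscr{B}(B_p)$. On the dense subspace each $\Delta(x_n)$ is a finite-rank operator by construction. A uniform limit of finite-rank operators is compact, and the extension of $\Delta$ is by definition a limit in operator norm of finite-rank operators $\Delta(x_n^{(k)})$ approximating a given element of the completion; therefore $\Delta$ maps the whole space into $\mathscr{K}(B_p)$, which is norm-closed in $\mathscr{B}(B_p)$. Since each $F_n$ is isometrically isomorphic to $\ell_1^{m_n}=\ell_1^{2^{n-1}}$ (as noted in the text after \eqref{defnFn}), the image is isomorphic to $\bigl(\bigoplus_{n\in\N}\ell_1^{2^{n-1}}\bigr)_{c_0}$, which in turn is isomorphic to $\bigl(\bigoplus_{n\in\N}\ell_1^n\bigr)_{c_0}$ by Pe\l{}czy\'{n}ski's decomposition method exactly as in the proof of Corollary~\ref{corKaniaAbstractMethod}.

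The remaining and most delicate step is to produce a bounded linear projection of $\mathscr{K}(B_p)$ onto the image of $\Delta$. The natural candidate is the left inverse $P\colon U\mapsto (Q_nUJ_n b_{m_{n+1}-1})_{n=1}^\infty$ modelled on the one in Lemma~\ref{lemmaKaniaAbstractMethod}, where $J_n\colon F_n\to B_p$ and $Q_n\colon B_p\to F_n$ are the coordinate embedding and projection associated with the finite-dimensional decomposition $(F_n)$, and $b_{m_{n+1}-1}$ plays the role of the norming vector $w_n$. One verifies directly that $P\circ\Delta=\operatorname{id}$ on the dense subspace, and hence on the completion. The obstacle is boundedness of $P$ into the $c_0$-sum: one must show both that the sequence $(Q_nUJ_n b_{m_{n+1}-1})_n$ has supremum norm controlled by $\|U\|$, which follows from the uniform boundedness of the projections $Q_n$ and embeddings $J_n$ furnished by the FDD, and crucially that this sequence actually lies in $c_0$ — i.e. that $\|Q_nUJ_n b_{m_{n+1}-1}\|\to 0$ — when $U$ is compact. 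I expect the latter to be the heart of the argument: compactness of $U$ together with $b_{m_{n+1}-1}\to 0$ weakly (the basis $(b_n)$ is weakly null, since $B_p$ is reflexive) should force $U b_{m_{n+1}-1}\to 0$ in norm, whence $\|Q_nUJ_n b_{m_{n+1}-1}\|\leqslant\|Q_n\|\,\|U b_{m_{n+1}-1}\|\to 0$, placing the image in $c_0$ as required. This completes the construction of the complemented copy.
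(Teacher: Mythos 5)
Your proposal is correct and follows essentially the same route as the paper: extend $\Delta$ by continuity to $\bigl(\bigoplus_{n\in\N}F_n\bigr)_{c_0}$ using Lemma~\ref{normestimateofDelta}, and construct the left inverse $U\mapsto(Q_nUb_{m_{n+1}-1})_{n=1}^\infty$, whose $c_0$-valuedness is exactly what the paper also derives from compactness of $U$ together with the weak nullity of $(b_{m_{n+1}-1})$ coming from reflexivity of $B_p$. The step you flag as the expected heart of the argument is indeed the paper's key observation, and your treatment of it is complete.
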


\begin{proof}
The map 
\begin{equation}\label{defnDeltaBp}
\Delta\colon\ (x_n)\mapsto\Delta(x_n),\quad \Bigl(\bigoplus_{n\in\N} F_n\Bigr)_{c_{00}}\to\mathscr{K}(B_p), \end{equation}
given by~\eqref{defnDelta} is linear, and Lemma~\ref{normestimateofDelta} implies that it is bounded with respect to the norm $\|(x_n)\|_\infty = \max_{n\in\N}\|x_n\|_{B_p}$ on its domain, so it extends  uniquely to a bounded operator \[ \Delta\colon \Bigl(\bigoplus_{n\in\N} F_n\Bigr)_{c_0}\to\mathscr{K}(B_p). \]

For $n\in\N$, let $Q_n\colon B_p\to F_n$ be the canonical basis projection of~$B_p$ onto~$F_n$, and define 
\[ \Theta(U) = (Q_nUb_{m_{n+1}-1})_{n=1}^\infty\qquad (U\in\mathscr{K}(B_p)). \]
Since $B_p$ is reflexive, $(b_{m_{n+1}-1})_{n=1}^\infty$ is a weak-null sequence. Hence  $(Ub_{m_{n+1}-1})_{n=1}^\infty$ is norm-null for each compact operator~$U$, and therefore $U\mapsto \Theta(U)$ defines a map from~$\mathscr{K}(B_p)$ into~$\bigl(\bigoplus_{n\in\N} F_n\bigr)_{c_0}$. This map is clearly bounded and linear, and it is a left inverse  of~$\Delta$. Now the conclusion follows from the facts that $F_n$ is isometrically isomorphic to~$\ell_1^{m_n}$ for each $n\in\N$ and $\bigl(\bigoplus_{n\in\N}\ell_1^{m_n}\bigr)_{c_0}$ is isomorphic to $\bigl(\bigoplus_{n\in\N}\ell_1^n\bigr)_{c_0}$. \end{proof}

\begin{proof}[Proof of Theorem~{\normalfont{\ref{mainthm}}} for $X=B_p$]
The bidual of~$\mathscr{K}(B_p)$ is~$\mathscr{B}(B_p)$ because~$B_p$ is reflexive and has a basis. Hence, passing to the biduals in Corollary~\ref{c0ofl1nsinKB}, we see that~$\mathscr{B}(B_p)$ contains a complemented subspace that is isomorphic to~$\bigl(\bigoplus_{n\in\N}\ell_1^n\bigr)_{\ell_\infty}$, and the conclusion follows from Theorem~\ref{johnsonell1thm} as before.
\end{proof}

Comparing the above proofs of Theorem~\ref{mainthm} for the Tsirelson  space on the one hand and the $p^{\text{th}}$ Baern\-stein space on the other, we see that the former is significantly  shorter and simpler. This is due to the fact that~$T$ is isomorphic to the $E$-direct sum of the blocks~$F_n$ for a suitably chosen Banach space~$E$ with a normalized, $1$-unconditional basis. Indeed, this fact immediately allowed us to define the diagonal operator~\eqref{defnDeltaT} for~$T$, whereas it required substantial work to establish the boundedness of its counterpart, which is the bi\-dual of the operator~\eqref{defnDeltaBp}, for~$B_p$.  One may wonder whether this extra effort is really necessary. The following result addresses this question, showing that at least some argument is required.

\begin{proposition}\label{unbdddiagop}
There exists a uniformly bounded sequence $(U_n)_{n=1}^\infty$ of rank-one op\-er\-a\-tors, where $U_n$ is defined on the subspace~$F_n$ of~$B_p$ given by~\eqref{defnFn}, such that the corresponding diagonal map defined on the subspace $\operatorname{span}\bigcup_{n\in\N} F_n\ (=c_{00})$ of~$B_p$ by 
\begin{equation}\label{diagop} \operatorname{diag}(U_n)\biggl(\sum_{j=1}^k x_j\biggr) = \sum_{j=1}^k U_jx_j\qquad (k\in\N,\, x_1\in F_1,\ldots, x_k\in F_k) \end{equation}
is not bounded with respect to the norm~$\|\cdot\|_{B_p}$.  
\end{proposition}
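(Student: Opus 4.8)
The plan is to exhibit explicit rank-one operators together with a sequence of test vectors on which the diagonal map blows up at a polynomial rate. For each $n\in\N$ let $g_n=\sum_{j\in M_n}b_j^*$ be the sum of the coordinate functionals over the $n^{\text{th}}$ block, and set
\[ U_n = b_{m_n}\otimes g_n,\quad\text{that is,}\quad U_nx = \langle x,g_n\rangle\,b_{m_n}\quad(x\in F_n). \]
Since $F_n$ is isometrically isomorphic to~$\ell_1^{m_n}$, its dual is $\ell_\infty^{m_n}$, so $\|g_n\|=1$; as $b_{m_n}$ is a unit vector, each $U_n$ is a rank-one operator on~$F_n$ with $\|U_n\|=1$, and so the sequence $(U_n)$ is uniformly bounded. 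I would then introduce, for each $N\in\N$, the test vector
\[ y^{(N)} = \sum_{n=1}^N \frac{1}{m_n}\sum_{j\in M_n}b_j\ \in c_{00}, \]
whose restriction to the block~$F_n$ has $\ell_1$-norm (and hence, by admissibility of~$M_n$, also $\|\cdot\|_{B_p}$-norm) equal to~$1$. A one-line computation gives $\bigl\langle \frac{1}{m_n}\sum_{j\in M_n}b_j,\,g_n\bigr\rangle=1$, so the diagonal map collapses each block onto a single unit coefficient: by~\eqref{diagop}, $\operatorname{diag}(U_n)y^{(N)} = \sum_{n=1}^N b_{m_n}$.

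For the lower bound on the output, I would fix $N$ large and put $n_0=\lceil\log_2 N\rceil+1$, so that $m_{n_0}=2^{n_0-1}\geqslant N$. The set $\{m_{n_0},m_{n_0+1},\ldots,m_N\}$ is then admissible, since its cardinality $N-n_0+1$ is at most its minimum $m_{n_0}\geqslant N$. Testing the Baern\-stein norm against this single admissible set and recalling that every coefficient of $\operatorname{diag}(U_n)y^{(N)}$ equals~$1$ yields
\[ \bigl\|\operatorname{diag}(U_n)y^{(N)}\bigr\|_{B_p} \geqslant \mu\bigl(\operatorname{diag}(U_n)y^{(N)},\{m_{n_0},\ldots,m_N\}\bigr) = N-n_0+1 \geqslant N-\log_2 N-1. \]

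The real work, and the step I expect to be the main obstacle, is the matching upper bound $\|y^{(N)}\|_{B_p}\leqslant 2^{1-1/p}N^{1/p}$, which must hold uniformly in~$N$. The key observation is a per-set estimate: if $N'\subseteq\N$ is admissible and $\min N'$ lies in the block $M_a$, then every element of~$N'$ lies in a block of index at least~$a$ and therefore carries a coefficient of~$y^{(N)}$ of modulus at most $2^{-(a-1)}$, so that
\[ \mu(y^{(N)},N') \leqslant |N'|\cdot 2^{-(a-1)} \leqslant \min N'\cdot 2^{-(a-1)} < 2^a\cdot 2^{-(a-1)} = 2. \]
Given any admissible sequence $N_1<\cdots<N_k$, the bound $\max_j\mu(y^{(N)},N_j)<2$ combined with $p>1$ and the disjointness of the~$N_j$ gives
\[ \sum_{j=1}^k\mu(y^{(N)},N_j)^p \leqslant \Bigl(\max_{1\leqslant j\leqslant k}\mu(y^{(N)},N_j)\Bigr)^{p-1}\sum_{j=1}^k\mu(y^{(N)},N_j) \leqslant 2^{p-1}\cdot N, \]
where the last inequality uses that $\sum_j\mu(y^{(N)},N_j)$ is at most the total variation $\sum_{n=1}^N m_n^{-1}|M_n|=N$ of~$y^{(N)}$. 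Taking the supremum over all admissible sequences yields $\|y^{(N)}\|_{B_p}\leqslant (2^{p-1}N)^{1/p}=2^{1-1/p}N^{1/p}$. Combining the two estimates, the ratio $\bigl\|\operatorname{diag}(U_n)y^{(N)}\bigr\|_{B_p}\big/\|y^{(N)}\|_{B_p}$ is bounded below by a constant multiple of $N^{1-1/p}$, which tends to infinity as $N\to\infty$ because $p>1$. Hence $\operatorname{diag}(U_n)$ is unbounded with respect to~$\|\cdot\|_{B_p}$, as claimed.
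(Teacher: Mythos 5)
Your proof is correct, and although it uses the very same operators $U_n$ (summation of each block onto its first coordinate) as the paper, the way you control the norms is genuinely different and in fact simpler. The paper tests $\operatorname{diag}(U_n)$ against the vectors $y_{q,r}=\sum_{n=q}^{r}\frac1n x_n$ with harmonically decaying block weights and computes their norm \emph{exactly}, $\|y_{q,r}\|_{B_p}=\bigl(\sum_{n=q}^{r}n^{-p}\bigr)^{1/p}$ (equation~\eqref{normypq}); this exact evaluation is the heart of the paper's argument and requires a delicate extremal analysis---showing that the optimal admissible sets must be precisely $M_q,\ldots,M_r$---together with the auxiliary convexity inequality of Lemma~\ref{calclemma}. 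Unboundedness then follows because the inputs tend to zero in norm while the outputs stay bounded below by $\frac12$. You instead take the unweighted vectors $y^{(N)}=\sum_{n=1}^{N}x_n$ and prove only an \emph{upper} bound $\|y^{(N)}\|_{B_p}\leqslant 2^{1-1/p}N^{1/p}$, obtained from two short observations: any single admissible set $N'$ whose minimum lies in $M_a$ captures mass $\mu(y^{(N)},N')<2$ (admissibility gives $|N'|\leqslant\min N'<2^{a}$, while every coefficient involved is at most $2^{-(a-1)}$), and the elementary estimate $\sum_j\mu_j^{p}\leqslant\bigl(\max_j\mu_j\bigr)^{p-1}\sum_j\mu_j$ converts this, together with the disjointness of the sets $N_j$ and the total mass $\|y^{(N)}\|_{\ell_1}=N$, into the stated bound. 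Since the output $\sum_{n=1}^{N}b_{m_n}$ admits the single admissible test set $\{m_{n_0},\ldots,m_N\}$ with $n_0=\lceil\log_2 N\rceil+1$, giving output norm at least $N-\log_2 N-1$, the ratio of norms grows like $N^{1-1/p}$, which suffices because $p>1$. I checked the individual steps---the admissibility claims, the per-set bound, the H\"{o}lder-type inequality, and the $\ell_1$-mass computation $\sum_{n=1}^{N}m_n^{-1}|M_n|=N$---and they are all sound. What the paper's approach buys is an exact norm formula (with the additional information that the canonical blocking $(M_q,\ldots,M_r)$ is the unique optimal configuration); what yours buys is brevity---no extremal analysis and no need for Lemma~\ref{calclemma}---plus an explicit polynomial rate of unboundedness rather than mere divergence.
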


It follows in particular that we cannot express~$B_p$ as the $E$-direct sum of the blocks~$F_n$ for any Banach space~$E$ with a normalized, $1$-unconditional basis, so that there is no counter\-part of \cite[Corollary~7(i)]{CJT} for~$B_p$. More generally, Proposition~\ref{unbdddiagop} implies that `diagonal operators' need not exist if one replaces the $E$-direct sum $X= \bigl(\bigoplus_{n\in\N} X_n\bigr)_E$ for a Banach space~$E$ with a normalized, $1$-unconditional basis with a Banach space~$X$ that merely has an unconditional finite-dimensional Schauder decomposition $(X_n)_{n=1}^\infty$.\smallskip  
 
In the proof of Proposition~\ref{unbdddiagop},  we shall require the following elementary inequality.
\begin{lemma}\label{calclemma}
Let $a,b,c\in(0,\infty)$ with $a>c$, and let $p\in(1,\infty)$. Then
\[ a^p + (b+c)^p < (a+b)^p + c^p. \]
\end{lemma}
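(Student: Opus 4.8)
The plan is to prove the elementary inequality
\[ a^p + (b+c)^p < (a+b)^p + c^p \qquad (a>c>0,\ b>0,\ p>1) \]
by a standard strict-convexity / calculus-of-one-variable argument. The cleanest approach is to \textbf{fix $b$ and $c$ and differentiate with respect to~$a$} (or equivalently to introduce the function measuring the gap). Concretely, I would define
\[ \varphi(a) = (a+b)^p + c^p - a^p - (b+c)^p \qquad (a\in(c,\infty)) \]
and aim to show $\varphi(a)>0$ on this interval.

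First I would compute the value at the left endpoint: setting $a=c$ gives
\[ \varphi(c) = (b+c)^p + c^p - c^p - (b+c)^p = 0, \]
so the inequality holds with equality precisely in the degenerate limit $a\to c$. Next I would differentiate,
\[ \varphi'(a) = p(a+b)^{p-1} - p\,a^{p-1} = p\bigl((a+b)^{p-1} - a^{p-1}\bigr), \]
and observe that since $b>0$ and $t\mapsto t^{p-1}$ is strictly increasing on $(0,\infty)$ (because $p>1$), we have $(a+b)^{p-1}>a^{p-1}$, whence $\varphi'(a)>0$ for every $a>0$. Thus $\varphi$ is strictly increasing on $[c,\infty)$ with $\varphi(c)=0$, so $\varphi(a)>0$ for all $a>c$, which is exactly the asserted strict inequality.

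This argument is essentially the whole proof; there is no genuine obstacle, only bookkeeping. The one point to state carefully is \emph{strictness}: the hypothesis $a>c$ (rather than $a\geqslant c$) is what allows us to conclude $\varphi(a)>\varphi(c)=0$ rather than merely $\varphi(a)\geqslant 0$, and the strictness of $\varphi'>0$ relies on $b>0$ together with $p>1$ (if $p=1$ the function $t\mapsto t^{p-1}$ is constant and the gap vanishes). An alternative, purely algebraic route avoiding calculus would be to write the claim as $(a+b)^p - a^p > (b+c)^p - c^p$ and note that the left-hand side is $\int_a^{a+b} p\,t^{p-1}\,dt$ while the right-hand side is $\int_c^{c+b} p\,t^{p-1}\,dt$; since $a>c$ the two integrals run over intervals of equal length~$b$, the former lying strictly to the right of the latter, and strict monotonicity of the integrand $t^{p-1}$ forces the former integral to be strictly larger. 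I would present whichever of these two formulations is shortest, likely the integral comparison, since it makes the strictness transparent in a single line.
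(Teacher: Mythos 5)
Your proof is correct and is essentially the same argument as the paper's: the paper also differentiates the gap function and anchors it at the equality point, the only cosmetic difference being that it first divides through by $c^p$ (using homogeneity) to work with $f(t)=(t+d)^p+1-t^p-(d+1)^p$ at $t=a/c$, $d=b/c$, whereas you differentiate directly in $a$. Your alternative integral comparison $\int_a^{a+b}pt^{p-1}\,dt>\int_c^{c+b}pt^{p-1}\,dt$ is a nice, equally valid one-line variant.
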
 
\begin{proof}
Consider the function $f\colon (0,\infty)\to\R$ given by $f(t) = (t+d)^p + 1 - t^p - (d+1)^p$, where $d\in(0,\infty)$ is a constant. Since~$f$ is differentiable with  \mbox{$f'(t) = p(t+d)^{p-1} - pt^{p-1}>0$} and $f(1)=0$, we conclude that $f(t)>0$ for $t>1$. Now the result follows by taking $t = a/c>1$ and $d = b/c>0$ in this inequality and rearranging it.  
\end{proof} 
 
\begin{proof}[Proof of Proposition~{\normalfont{\ref{unbdddiagop}}}] 
For each $n\in\N$, let $U_n$ be the summation operator onto the first coordinate of~$F_n$, that is, \[ U_n\colon\  \sum_{j\in M_n} \alpha_j b_j\mapsto \biggl(\sum_{j\in M_n} \alpha_j\biggr) b_{m_n},\quad F_n\to F_n. \] This is a rank-one operator which has norm~$1$ because $F_n$ is isometrically isomorphic to~$\ell_1^{m_n}$.\smallskip  

Take natural numbers $q\leqslant r$, and set 
\[ y_{q,r} = \sum_{n=q}^{r} \frac{1}{n}x_n,\qquad\text{where}\qquad x_n = \frac{1}{2^{n-1}}\sum_{j\in M_n} b_j\in F_n\qquad (n\in\N). \]
We aim to prove that
\begin{equation}\label{normypq} \| y_{q,r}\|_{B_p} =  \biggl(\sum_{n=q}^r \frac{1}{n^p}\biggr)^{\frac1p}. \end{equation}
The right-hand side of~\eqref{normypq}  is certainly a lower bound on the norm of~$y_{q,r}$ because the sets  \mbox{$M_q<M_{q+1}<\cdots<M_r$} are admissible, so that 
\begin{equation}\label{normypqlower} \| y_{q,r}\|_{B_p}\geqslant \nu_p(y_{q,r}; M_q,\ldots, M_r) = \biggl(\sum_{n=q}^r \frac{1}{n^p}\biggr)^{\frac1p}. \end{equation}

The reverse inequality requires more work. We begin by observing that  since $y_{q,r}$ is finitely supported, the supremum in the definition~\eqref{defnBaernsteinnorm} of the norm of~$y_{q,r}$ is attained, say 
\begin{equation}\label{normofyqrattained} \| y_{q,r}\|_{B_p} = \nu_p(y_{q,r}; N_1,\ldots,N_k), \end{equation}
where $k\in\N$ and $N_1<\cdots<N_k$ are admissible. We may suppose that $\min N_1\geqslant m_q$ and $\max N_k< m_{r+1}$ because the support of~$y_{q,r}$ is contained in $[m_q,m_{r+1})$. We shall show that this additional assumption forces $(N_1,\ldots, N_k) = (M_q,\ldots,  M_r)$, which in the light of~\eqref{normypqlower} will ensure that~\eqref{normypq} holds.\smallskip 

First, we note that $\min N_1 = m_q$ because otherwise we would have $\{m_q\}<N_1$ and 
\[ \nu_p(y_{q,r};N_1,\ldots,N_k)< \nu_p(y_{q,r};\{ m_q\}, N_1,\ldots,N_k)\leqslant \| y_{q,r}\|_{B_p}, \] 
contrary to~\eqref{normofyqrattained}. Similar reasoning shows that 
\[ \max N_k = m_{r+1}-1\qquad\text{and}\qquad \max N_{j} +1 = \min N_{j+1}\qquad (j\in\{1,\ldots, k-1\}). \] 
Moreover,  each of the sets $N_1,\ldots,N_k$ must be an inter\-val:  indeed,
if~$N_j$ were not an~inter\-val for some~$j\in\{1,\ldots,k\}$, then the unique interval~$N_j'$ with $\min N_j' = \min N_j$ and \mbox{$|N_j'| = |N_j|$} is admissible and satisfies  $\mu(y_{q,r},N_j')\geqslant \mu(y_{q,r},N_j)$ because the coordinates of~$y_{q,r}$ in~$[m_q,m_{r+1})$ are decreasing and positive, and hence
\[ \nu_p(y_{q,r};N_1,\ldots,N_k)< \nu_p(y_{q,r}; N_1,\ldots,N_{j-1},N_j',\{\max N_j'+1\},N_{j+1},\ldots, N_k)\leqslant \| y_{q,r}\|_{B_p}, \]
again contrary to~\eqref{normofyqrattained}. Thus we conclude that \[ \bigcup_{j=1}^k N_j = [m_q,m_{r+1})\cap\N = \bigcup_{s=q}^r M_s. \]

Assume towards a contradiction that $(N_1,\ldots,N_k)\ne (M_q,\ldots,M_r)$. For each $s\in\N$, $M_s$~is maximal among all admissible intervals with minimum~$m_s$, and so there must be some $j\in\{2,\ldots,k\}$ such that $\min N_j\notin\{ m_s : q<s\leqslant r\}$. Let~$j_0$ be the smallest such~$j$, so that $\min 
N_{j_0 - 1} = m_s$ for some $s\in\{q,\ldots,r-1\}$, but $t = \min N_{j_0}$ is not of the form~$m_u$ for any~$u\in\N$, which implies that $t<m_{s+1}$ because $N_{j_0-1}$ is admissible. 
Since $N_{j_0}$ is also admissible, we have $|N_{j_0}|\leqslant t$. Hence the interval
$N_{j_0}'' = N_{j_0}\cap [m_{s+1},\infty)\subseteq M_{s+1}$ satisfies
\[ |N_{j_0}''| = |N_{j_0}| - (m_{s+1} - t)\leqslant 2t - m_{s+1} = 2(t-m_s), \]
from which we deduce that
\begin{equation}\label{mueq} \mu(y_{q,r},N_{j_0}'') = \frac{|N_{j_0}''|}{(s+1)2^s}\leqslant \frac{2(t - m_s)}{(s+1)2^s}< 
\frac{t - m_s}{s2^{s-1}} = \frac{|N_{j_0-1}|}{s2^{s-1}} = \mu(y_{q,r},N_{j_0-1}). 
\end{equation}
Set $L = [t, m_{s+1})\cap\N$. Then we have $N_{j_0} = L\cup N_{j_0}''$ and $M_s = N_{j_0-1}\cup L$, with both unions being disjoint, so in the light of~\eqref{mueq} we can apply Lemma~\ref{calclemma} to obtain the following in\-equality
\begin{multline*} \mu(y_{q,r},N_{j_0-1})^p+ \mu(y_{q,r},N_{j_0})^p 
= \mu(y_{q,r},N_{j_0-1})^p + \bigl(\mu(y_{q,r},L) + \mu(y_{q,r}, N_{j_0}'')\bigr)^p\\
< \bigl(\mu(y_{q,r},N_{j_0-1}) + \mu(y_{q,r}, L)\bigr)^p + \mu(y_{q,r}, N_{j_0}'')^p = 
\mu(y_{q,r},M_s)^p + \mu(y_{q,r}, N_{j_0}'')^p. \end{multline*}
Thus the admissible sets $N_1<\cdots< N_{j_0-2} < M_s < N_{j_0}'' < N_{j_0+1}<\cdots <N_k$ satisfy
\[ \nu_p(y_{q,r};N_1,\ldots,N_k)< \nu(y_{q,r};N_1,\ldots, N_{j_0-2}, M_s, N_{j_0}'', N_{j_0+1},\ldots, N_k)\leqslant \|y_{q,r}\|_{B_p}, \]
once again contradicting~\eqref{normofyqrattained}. Therefore we must have $(N_1,\ldots,N_k) = (M_q,\ldots,M_r)$, and as already explained,~\eqref{normypq} follows.\smallskip  

Direct application of the definitions shows that
\begin{equation}\label{Akypq} \operatorname{diag}(U_n) y_{q,r} = \sum_{n=q}^r \frac{1}{2^{n-1}n} U_n\biggl(\sum_{j\in M_n} b_j\biggr) = \sum_{n=q}^r\frac{1}{n}b_{m_n}. \end{equation}
In particular, choosing $r = 2q$ and $q\geqslant 3$, we see that 
the support $N = \{ m_n\colon q\leqslant n\leqslant 2q\}$ of the vector on the right-hand side of~\eqref{Akypq} is admissible because \[ |N| = q+1\leqslant 2^{q-1} = \min N. \] Hence on the one hand we have
\[ \|\operatorname{diag}(U_n)y_{q,2q}\|_{B_p} = \mu(\operatorname{diag}(U_n)y_{q,2q},N)=\sum_{n=q}^{2q}\frac{1}{n}\geqslant 
\frac{q+1}{2q}> \frac12\qquad (q\geqslant 3). \]
On the other hand, \eqref{normypq} shows that $\| y_{q,2q}\|_{B_p}\to 0$ as $q\to\infty$ because the series $\sum_{n=1}^\infty 1/n^p$ converges. Therefore $\operatorname{diag}(U_n)$ cannot be bounded. 
\end{proof}

\section{Open problems}
\noindent
Pfitzner~\cite{pfitzner} showed that $C^*$-algebras have Pe\l{}czy\'{n}ski's property ($V$). This implies that von Neu\-mann alge\-bras are Grothen\-dieck spaces, so in particular~$\mathscr{B}(H)$ is a Grothen\-dieck space for each Hilbert space~$H$. No other examples of infinite-dimensional Banach spaces~$X$ for which~$\mathscr{B}(X)$ is a Grothen\-dieck space are known. In particular, the following questions re\-main open:  
\begin{itemize}
\item Is $\mathscr{B}(\ell_p)$ a Grothen\-dieck space for $p\in (1,\infty)\setminus\{2\}$? In this case, the problem is equivalent to deciding whether  $\bigl(\bigoplus_{n\in\N}\mathscr{B}(\ell_p^n)\bigr)_{\ell_\infty}$ is a Grothen\-dieck space because~$\mathscr{B}(\ell_p)$ and $\bigl(\bigoplus_{n\in\N}\mathscr{B}(\ell_p^n)\bigr)_{\ell_\infty}$ are isomorphic as Banach spaces (see \cite[Theorem~2.1]{AF}; for $p=2$, see also \cite[p.~317]{ChrSinc}, where the result is credited to Haage\-rup and Linden\-strauss).
\item Is $\mathscr{B}(X)$ a Grothen\-dieck space for every weak Hilbert space~$X$?
\item More generally, is $\mathscr{B}(X)$ a Grothen\-dieck space for every super-reflexive Banach space~$X$? 
\item Let $X$ be a reflexive Banach space with an unconditional finite-di\-men\-sional Schau\-der de\-compo\-si\-tion $(F_n)_{n=1}^\infty$ whose blocks $F_n$ are uniformly isomorphic to~$\ell_1^{m_n}$, where $m_n = \dim F_n\to\infty$ as $n\to\infty$. Is it true that~$\mathscr{B}(X)$ is not a Grothen\-dieck space? If it is true, it would provide a unified proof of the two cases considered in Theorem~\ref{mainthm}. 
\item Is there a non-reflexive Banach space~$X$ for which~$\mathscr{B}(X)$ is a Grothen\-dieck space? If such a space~$X$ exists, both~$X$ and~$X^*$ would necessarily be Grothendieck spaces (because~$\mathscr{B}(X)$ contains complemented copies of them), and therefore~$X$ would be non-separable. No non-reflexive Grothendieck space~$X$ for which~$X^*$ is also a Grothendieck space is known.
\end{itemize}

\subsection*{Acknowledgements} The work that led to the present paper was initiated when the first-named author visited the UK in February/March 2017. We are pleased to acknowledge the award of a Scheme~2 grant from the London Mathematical Society that made this visit possible. The second-named author acknowledges with thanks funding received from the European Research Council (ERC Grant Agreement No.~291497). Finally, we are grateful to Professor Hermann Pfitzner for having pointed out a critical error in an earlier version of this paper.

\end{document}